\newtheorem{theorem}[subsection]{Theorem}
\newtheorem{proposition}[subsection]{Proposition}
\theoremstyle{definition}
\newtheorem{remark}[subsection]{Remark}
\newtheorem{definition}[subsection]{Definition}
\numberwithin{equation}{section} \setcounter{tocdepth}{1}
\newcommand{\xn}{{\mathbf{x}}^{(n)}}
\newcommand{\bea}{\begin{eqnarray}}
\newcommand{\eea}{\end{eqnarray}}
\newcommand{\N}{\mathbb N}
\newcommand{\R}{\mathbb{R}}
\def \> {\Rightarrow}
\def \0 {\emptyset}
\def \xo {{\mathbf{x}}^{(0)}}
\DeclareMathOperator{\inte}{int}
\begin{document}
\title[A prey-predator model with three interacting species]{A prey-predator model with three interacting species}

\author{U.U. Jamilov, M. Scheutzow, I. Vorkastner}

\date{\today}

  \address{ U.\ U.\ Jamilov\\ V.I. Romanovskiy Institute of Mathematics,  Uzbekistan Academy of Sciences,
81, Mirzo-Ulugbek str., 100170, Tashkent, Uzbekistan.}
\email{jamilovu@yandex.ru}

\address{M.\ Scheutzow\\ Institut f\"ur Mathematik, MA 7-5, Fakult\"at II,
        Technische Universit\"at Berlin, Stra\ss e des 17.~Juni 136, 10623 Berlin, FRG;}
\email {ms@math.tu-berlin.de}

\address{I.\ Vorkastner\\ Institut f\"ur Mathematik, MA 7-5, Fakult\"at II,
        Technische Universit\"at Berlin, Stra\ss e des 17.~Juni 136, 10623 Berlin, FRG;}
\email {vorkastn@math.tu-berlin.de}

\begin{abstract}

  In this paper we consider a class of discrete time prey-predator models with three interacting species defined on the two-dimensional simplex.
  For some choices of parameters of the operator describing the evolution  of the relative frequencies, we show that the ergodic hypothesis does
  not hold. Moreover, we prove that any order
  Ces\`aro mean of the trajectories diverges. For another class of parameters, we show that all orbits starting from the interior of the simplex
  converge
  to the unique fixed point of the operator while for the remaining choices of parameters all orbits converge to one of the vertices of the simplex. 
  Contrary to many authors we study discrete time models but we include a {\em speed function} $f$ in the dynamics which allows us to approximate the continuous-time
  case arbitrarily well when $f$ is small.  
\end{abstract}

\subjclass[2010] {Primary 37N25, Secondary 92D10.}

\keywords{quadratic stochastic operator, cubic stochastic operator, Volterra cubic operator, non-ergodic operator.}

\maketitle

\section{Introduction}

During the past decades, many mathematical models have been suggested in order to model the time evolution of interacting populations in the most general sense including for example the
evolution of gene frequencies. As an example,  
following \cite{FW1},   consider  the following {\em  Kolmogorov system} of three interacting populations.
   \begin{align}\label{ks}
    \frac{d x_1}{d t}& = x_1f_1(x_1,x_2,x_3), \notag\\
     \frac{d x_2}{d t}& = x_2f_2(x_1,x_2,x_3),  \\
     \frac{d x_3}{d t}& = x_3f_3(x_1,x_2,x_3),\notag\\[2mm]
     x_1(0)&\geq 0, \ \ x_2(0)\geq 0, \ \ x_3(0)\geq 0, \notag
  \end{align}
where $f_i, \, i=1,2,3$ are given functions modelling the growth (or decay) rates of the populations.
Such models arise in biology e.g.~as food chain models \cite{CPPW},\cite{FW2}.
In nature, and in the sea in particular, there are many species or trophic levels where energy, in the form of food, flows from one species to another.
The mass of the total number of individuals in a species is often
referred to as its biomass. 
The ultimate source of energy is the sun, and in the sea, for example, the {\em trophic web} runs through plankton,
fish, sharks up to whales and finally man, with a myriad of species in between. The
species on one trophic level may predate several species below it. In general, models
involve interactions between several species\cite{Mu}.

System \eqref{ks} models a three (or two) trophic level system. Usually, the population described
by $x_1(t)$ will be a prey population which in the absence of competitors or predators will grow
to the carrying capacity of its environment. It is growing on nutrients
outside the system being modeled. $x_3(t)$ will usually be a predator feeding exclusively on populations within the system
($x_1(t)$ or $x_2(t)$ or both) and hence will become extinct if its prey doesn't survive. $x_2(t)$ will be
either a predator or a prey or both. 

One important question in mathematical models like \eqref{ks} or corresponding discrete time models is the long-time behavior of the absolute or relative population sizes.
In particular, it is of great interest to determine whether a particular population becomes extinct in the long run. If this does not happen to a given population in the model,
then one says that the population {\em persists}. We will provide a precise definition in the set-up of our discrete time model in the next section.


\emph{Quadratic stochastic operators} (see e.g. \cite{K1}).
The notion of a quadratic stochastic operator (QSO) was introduced by Bernstein\cite{Br}.
Such operators arise in models of mathematical genetics and model the dynamics of gene frequencies from one generation to the next. QSOs can also
be used as models for the evolution of predator-prey populations. The  theory of QSOs has been developed in many papers (see \cite{GGJ},\cite{K1},\cite{Lyu}, \cite{Ul},\cite{Zakh}), see \cite{GMR} for a recent review.

 Let  $E=\{1,\dots,m\}$ be a finite set. We denote the set of all probability distributions on $E$ by
\begin{equation*}
S^{m-1}=\Big\{ \mathbf{x}=\big(x_1,\dots,x_m\big)\in {\mathbb{R}^m}: x_i\geq 0,\ \,{\forall  i \in E, }\, \sum_{i=1}^m x_i=1 \Big\},
\end{equation*}
which is the $m-1$- dimensional simplex.
 A \emph{quadratic stochastic operator} is a mapping $V:S^{m-1}\mapsto S^{m-1} $ of the simplex to itself of the form $V(\mathbf{x})=\mathbf{x}' \in S^{m-1}$, where
\begin{equation}
\label{qso}
x'_k=\sum_{i,j\in E }p_{ij,k}x_i x_j, \ \ k \in E,
\end{equation}
and the coefficients $p_{ij,k}$  satisfy
\begin{equation}\label{koefqso}
p_{ij,k}=p_{ji,k}\geq 0, \quad  \ \ \sum_{k\in E}p_{ij,k}=1, \ \ i,j,k \in E.
\end{equation}

The \emph{trajectory} (orbit) $\{\mathbf{x}^{(n)}\}, n=0,1,\dots$ of $V$ for an initial {value} $\mathbf{x}^{(0)}\in S^{m-1}$  is defined by
\begin{equation*}
\mathbf{x}^{(n+1)}=V(\mathbf{x}^{(n)})={V^{n+1}(\mathbf{x}^{(0)})}, \quad n=0,1,2,\dots
\end{equation*}

For a nonlinear (quadratic) dynamical system \eqref{qso}, \eqref{koefqso} Ulam \cite{Ul} suggested an analogue of a
measure-theoretic ergodicity in the form of the following ergodic hypothesis: a QSO $V$ is said to be \emph{ergodic} if the limit
\[
\lim_{n\to \infty } \frac{1}{n} \sum_{k=0}^{n-1} V^k(\mathbf{x})
\]
exists for any $\mathbf{x}\in S^{m-1}$.

On the basis of numerical calculations Ulam, in \cite{Ul}, conjectured  that
the ergodic theorem holds for any QSO.

The QSO $V$ is called \emph{Volterra}, if $p_{ij,k}=0$ for any  $k\notin\{i,j\}, \ \ i,j,k \in E$.
In  \cite{Zakh}, Zakharevich  considered the Volterra QSO on $S^2$
\begin{equation}\label{zakhop}
 V:\left\{\begin{array}{l}
   x'_1=x^2_1+2x_1x_2 \\
   x'_2=x^2_2+2x_2x_3 \\
   x'_3=x^2_3+2x_1x_3
 \end{array}\right.
\end{equation}
and showed that it is non-ergodic, that is he proved that Ulam's conjecture is false in general.
Later in \cite{GaZa04}, the authors  established a necessary  condition for a QSO defined on $S^2$  to be a non-ergodic
transformation and thus generalized Zakharevich's result to a larger class of Volterra QSOs.
\cite{GGJ} showed the relation between non-ergodicity of Volterra QSOs and
rock-paper-scissors games which allows to interpret  Zakharevich's example \eqref{zakhop} in terms game theory.
In \cite{ms} the authors generalized the results of papers \cite{Zakh} and \cite{GaZa04}  by considering  a class of Lotka-Volterra operators  defined on
the two-dimensional simplex for which the ergodic theorem  fails.

The biological significance of non-ergodicity of a QSO  is the following: in the long run the behavior of the distribution of species behaves in a chaotic manner. In particular it does not stabilize to an equilibrium nor does it approach a periodic trajectory.\\


In the present paper we consider a class of operators defined on the two-dimensional simplex $S^2$ and study
the asymptotic behavior of the  trajectories generated by this operator. We will see that the asymptotics will heavily depend on the choice of
the parameters. For some choices, the trajectories are non-ergodic, for other choices all orbits starting
from the interior of the simplex converge to the unique fixed point of the operator while
for the remaining choices of parameters all orbits converge to one of the vertices of the
simplex.


\section{{Main results}}
\label{sec:mainresult}

Let $f:S^{2}\mapsto(0,1]$ be continuous and let $a,b,c \in [-1,1]\backslash\{0\}$ be parameters.

We consider the following evolution operator of the population which is a discrete analog of the Kolmogorov model (see \cite{FW1})
of three interacting populations of the form

\begin{equation}\label{cneop}
 W_{f,a,b,c}:\left\{\begin{array}{l}
   x'_1=x_1\Big(1+\big(ax_1x_2-bx^2_3\big)f(\mathbf{x})\Big), \\[2mm]
   x'_2=x_2\Big(1+\big(cx_2x_3-ax^2_1\big)f(\mathbf{x})\Big), \\[2mm]
   x'_3=x_3\Big(1+\big(bx_3 x_1-cx^2_2\big)f(\mathbf{x})\Big).
 \end{array}\right.
\end{equation}

Note that $W_{f,a,b,c}$ maps $S^2$ to $S^2$. The function $f$ is something like a local speed. If  $f(\mathbf{x})$  is close to 0 for all $\mathbf{x}$, then the system moves in small steps and thus resembles a continuous-time system which  we will comment on in Section \ref{conticase}. If the function $f$ is constant, then the operator $W_{f,a,b,c}$ is a {\em cubic stochastic operator} (CSO). For previous studies on CSOs
see  \cite{DJL,HJS,JKL,JL1,RoKh1,RoKh2} and references therein.



We will use the following notation. Let $\inte S^{2}:=\{\mathbf{x}\in S^{2}:  x_1x_2x_3>0\}$ and  $\partial S^{2}:=S^{2}\setminus \inte S^{2}$   be the interior and the boundary of the simplex $S^2$, respectively;\\
a {\em face} of the simplex $S^{2}$ is a set $\Gamma_\alpha=\{\mathbf{x}\in S^{2}: x_i=0, \ \  \, i\notin \alpha\}$, where $\alpha \subset \{1,2,3\}\backslash \emptyset$;\\
let $\mathbf{e}_i=(\delta_{1i},\delta_{2i},\delta_{3i})\in S^{2},\  i=1,2,3$, denote the vertices of the simplex $S^{2}$, where $\delta_{ij}$ is the Kronecker delta.\\
  Let   $\omega(\xo)$ be the set of limit points of the trajectory $\{W_{f,a,b,c}^k(\xo)\in S^{2}: k=0,1,2,\dots \}$.
Observe that $\omega(\xo)$ is non-empty since $S^{2}$ is compact and that $W$ maps $\omega(\xo)$ onto itself. We will sometimes write $W$ instead of
$W_{f,a,b,c}$ and $\xn$ instead of $W^n(\xo)$. Recall the following  definition.

\begin{definition}  A continuous function $\varphi \colon S^{2}\rightarrow \R$ is called a {\em Lyapunov function} (respectively {\em increasing Lyapunov function}) for $W_{f,a,b,c}$ if
$\varphi(W_{f,a,b,c} (\mathbf{x}))\leq \varphi(\mathbf{x})$ (respectively ``$\geq$'')  for all $\mathbf{x} \in S^2$.
\end{definition}

\begin{definition}
 A  point $\mathbf{x}\in S^2$ is called a {\em fixed point} of $W$ if $W(\mathbf{x})=\mathbf{x}$.
\end{definition}

Let us now state two slightly different {\em persistence} properties of $W$ (or the dynamical system generated by $W$).

\begin{definition}\cite{GH} \label{pgh} $W$ is called {\em weakly persistent} if each solution $\mathbf{x}=\mathbf{x}(t)$ of \eqref{cneop} with
$x_1(0)> 0, \ \ x_2(0) > 0, \ \ x_3(0)> 0$ satisfies $ \lim\sup_{t\rightarrow\infty} x_i(t)>0$  for all $i \in \{1,2,3\}$.
\end{definition}

\begin{definition}\cite{FW1} \label{pfw} $W$ is said to be  {\em strongly persistent} if each solution $\mathbf{x}=\mathbf{x}(t)$ of \eqref{cneop} with
$x_1(0)> 0, \ \ x_2(0) > 0, \ \ x_3(0)> 0$ satisfies $ \lim\inf_{t\rightarrow\infty} x_i(t)>0$  for all $i \in \{1,2,3\}$.
\end{definition}

We will now investigate the asymptotics of $W_{f,a,b,c}$ in the case where two of the parameters $a,b,c$ have a different sign, then in case all
parameters are positive and then in case all parameters are negative. It will turn out that in the first case (Theorem \ref{rr}) there is no persistence,
even more: just one species will survive
in the long run. In the second case (Theorem \ref{mr}), weak (but not strong)  persistence holds while in the third case (Theorem \ref{mr2}) the system is strongly persistent.
Note that we excluded the case in which one or more parameters are
zero since these cases are easy and not very interesting.

\begin{theorem}\label{rr}
  If either $ab<0$ or $ac<0$ or $bc<0$ holds, then every trajectory of $W_{f,a,b,c}$ converges to a vertex of $S^2$.
\end{theorem}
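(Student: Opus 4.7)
The plan is to exploit the sign condition $ab<0$ to force one of the growth rates (writing $x_i' = x_i(1 + r_i(\mathbf{x}) f(\mathbf{x}))$) to have a definite sign, making $x_i^{(n)}$ monotone along every trajectory. The operator $W_{f,a,b,c}$ has a cyclic symmetry: the simultaneous relabelling $(x_1,x_2,x_3)\mapsto(x_2,x_3,x_1)$ and $(a,b,c)\mapsto(c,a,b)$ leaves \eqref{cneop} invariant, and under this action the three hypotheses $ab<0$, $ac<0$, $bc<0$ form a single orbit, so it suffices to treat the case $ab<0$.

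Assume first that $a>0$ and $b<0$ (the opposite sign choice is analogous). Then $r_1 = ax_1 x_2 + |b| x_3^2 \ge 0$, so $x_1^{(n)}$ is non-decreasing and converges to some $L_1 \in [0,1]$. The crucial additional observation is that, since $c\ne 0$, a second coordinate is monotone as well: if $c>0$ then $r_3 = -|b| x_1 x_3 - c x_2^2 \le 0$ so $x_3^{(n)}$ is non-increasing, while if $c<0$ then $r_2 = -|c| x_2 x_3 - a x_1^2 \le 0$ so $x_2^{(n)}$ is non-increasing. Call this second monotone coordinate $x_i^{(n)}$ with limit $L_i$.

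Since $f$ is continuous and strictly positive on the compact simplex $S^2$, it is bounded below by some $f_{\min}>0$. From $x_j^{(n+1)}-x_j^{(n)} = x_j^{(n)} r_j(\mathbf{x}^{(n)}) f(\mathbf{x}^{(n)}) \to 0$ for $j \in \{1,i\}$, we conclude that at every $\bar{\mathbf{x}}\in\omega(\xo)$ the product $\bar{x}_j r_j(\bar{\mathbf{x}})$ vanishes. This is where the sign assumption pays off: each of these two $r_j$'s is a sum of two same-sign terms, so both terms must vanish separately, forcing $\bar{x}_1\bar{x}_2 = \bar{x}_1\bar{x}_3 = \bar{x}_2\bar{x}_3 = 0$ and hence $\bar{\mathbf{x}} \in \{\mathbf{e}_1,\mathbf{e}_2,\mathbf{e}_3\}$. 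Because every limit point also satisfies $\bar{x}_1 = L_1$ and $\bar{x}_i = L_i$, and the three vertices have pairwise distinct $(x_1,x_i)$-coordinates, $\omega(\xo)$ consists of a single vertex and the trajectory converges to it.

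The main technical content is extracting a \emph{second} monotone coordinate from the parameter sign condition, beyond the obvious first one; without it, the case $L_1 = 0$ would only confine the $\omega$-limit set to the face $\{x_1=0\}$, still leaving room for oscillation between two vertices. The remaining sub-case $a<0$, $b>0$ is handled identically, with $x_1^{(n)}$ now non-increasing and either $x_2^{(n)}$ or $x_3^{(n)}$ non-decreasing depending on the sign of $c$.
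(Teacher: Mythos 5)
Your proof is correct and follows essentially the same route as the paper's: the hypothesis $ab<0$ makes $x_1^{(n)}$ monotone, the sign of $c$ makes a second coordinate monotone, and these two facts pin the trajectory down to a single vertex. The only minor differences are that you identify the limit via the vanishing of $\bar{x}_j r_j(\bar{\mathbf{x}})$ on the $\omega$-limit set rather than by noting that all three coordinates converge and the limit must therefore be a fixed point (necessarily a vertex), and that your non-strict monotonicity argument covers interior and boundary starting points uniformly, whereas the paper handles the faces in a separate case.
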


\begin{proof} Assume that $ab<0$ (the other cases can be treated analogously).  It is easy to verify that  the faces
$\Gamma_{\{1,2\}}, \Gamma_{\{2,3\}}, \Gamma_{\{1,3\}}$ and $\inte S^2$ are invariant sets with respect to $W_{f,a,b,c}$.
Clearly, the set of fixed  points of $W_{f,a,b,c}$ equals  $\big\{\mathbf{e}_1, \, \mathbf{e}_2, \, \mathbf{e}_3\big\}$.

Assume that $\xo\in \inte S^2$. Then the sequence $x_1^{(n)}$ is strictly increasing if $a>0,\,b<0$ and strictly decreasing if $a<0,\,b>0$,
so the sequence converges. Depending on the sign of $c$, it follows that one of the sequences $x_2^{(n)}$  or $x_3^{(n)}$ is either increasing or decreasing
and therefore converges as well. Since $x_1^{(n)}+ x_2^{(n)}+x_3^{(n)}=1$, the remaining sequence has to converge as well.
Let $x^*:=\lim_{n \to \infty}\mathbf{x}^{(n)}$. Then $x^*$ is a fixed point of $W_{f,a,b,c}$, so $x^*\in \big\{\mathbf{e}_1, \, \mathbf{e}_2, \, \mathbf{e}_3\big\}$.
Note that all trajectories starting in  $\inte S^2$ converge to the same vertex.

It remains to investigate the case $\xo\in \partial S^2$.
%
%
Let $\mathbf{x}^{(0)}\in \Gamma_{1,3}$. Then the restriction of the operator $W_{f,a,b,c}$ to this face has the form
\begin{equation*}
\left\{\begin{array}{l}
   x'_1=x_1(1-bx^2_3f(\mathbf{x})), \\[2mm]
   x'_3=x_3(1+bx_1x_3f(\mathbf{x})).
 \end{array}\right.
\end{equation*}
Therefore, the sequence  $x_1^{(n)}$ is either nondecreasing or nonincreasing and therefore $x^*:=\lim_{n \to \infty}\mathbf{x}^{(n)}$ exists and equals
either $\mathbf{e}_1$ or $\mathbf{e}_3$. The cases $\mathbf{x}^{(0)}\in \Gamma_{1,2}$ and $\mathbf{x}^{(0)}\in \Gamma_{2,3}$ can be treated analogously.
%
%
%
%
%
\end{proof}

The following theorem shows that the long-time behavior of the dynamical system generated by $W_{f,a,b,c}$ is completely different when all
parameters are positive. Let us first introduce Ces\'aro means.

\begin{definition}
  For $\mathbf{x} \in S^2$ and $\mathbf{x}^{(n)}:=W_{f,a,b,c}^n(\mathbf{x})$, $n \in \N_0$ we define the {\em $k$-th order Ces\'aro} sequences by
  $$
  \mathbf{c}^{(n)}_0(\mathbf{x}):=\mathbf{x}^{(n)},\;\mathbf{c}^{(n)}_{k+1}(\mathbf{x}):=    \frac 1{n+1}\sum_{i=0}^n \mathbf{c}^{(i)}_{k}(\mathbf{x}),\; n,k \in \N_0.
$$
\end{definition}

\begin{remark}\label{Cesaro}
  Note that $\mathbf{c}^{(n)}_{k}(\mathbf{x})=\sum_{i=0}^na_{i,k,n}\mathbf{x}^{(i)}$, where $a_{i,k,n}\ge 0$ and $\sum_{i=0}^n a_{i,k,n}=1$ for all $k,n \in \N_0$.
  The coefficients can be recursively computed via
  $$
a_{i,0,n}=\delta_{in},\; a_{i,k+1,n}=\frac 1{n+1}\sum_{j=i}^n a_{i,k,j},\; i,k,n\in \N_0.
  $$
  Further, for each $k \in \N_0$, we have
  \begin{equation}\label{Cesaroproperty}
\lim_{\varepsilon \downarrow 0} \liminf_{n \to \infty}\sum_{i=\lfloor \varepsilon n\rfloor}^n a_{i,k,n}=1.
  \end{equation}
\end{remark}

\begin{theorem}\label{mr}
  If $a>0, b>0, c>0$, then, for each $k \in \N_0$, the vertices  $\mathbf{e}_1$, $\mathbf{e}_2$ and $\mathbf{e}_3$ are limit points of the $k$-th order Ces\'aro sequences
  $\mathbf{c}^{(n)}_{k}(\mathbf{x})$, $n=0,1,2,...$
whenever $\mathbf{x} \in \inte S^2\backslash \{\mathbf{x^*}\}$, where
$$
\mathbf{x^*}:=\bigg({\lambda_1\over \lambda_1+\lambda_2+\lambda_3},{\lambda_2\over \lambda_1+\lambda_2+\lambda_3},{\lambda_3\over \lambda_1+\lambda_2+\lambda_3}\bigg)
$$
and $\lambda_1=\sqrt[3]{bc^2}, \, \lambda_2=\sqrt[3]{ab^2}, \, \lambda_3=\sqrt[3]{ca^2}$.
In particular, $W_{f,a,b,c}$ is a non-ergodic transformation.
\end{theorem}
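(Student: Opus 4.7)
The plan is to mimic the Zakharevich-type strategy for proving non-ergodicity: construct a strict Lyapunov function that drives every non-trivial orbit to the boundary of $S^2$, identify the boundary attractor as a heteroclinic cycle through the three vertices, and show that the sojourn times near successive vertices grow so fast that every $k$-th Cesàro mean recovers each vertex along a suitable subsequence.

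For the Lyapunov function I would take
\[
L(\mathbf{x}) := \lambda_1 \log x_1 + \lambda_2 \log x_2 + \lambda_3 \log x_3
\]
on $\inte S^2$. Using $\log(1+y) \le y$ and setting $A = (a^2b)^{1/3}$, $B = (ac^2)^{1/3}$, $C = (b^2c)^{1/3}$, a direct expansion yields the key identity
\[
L(W\mathbf{x}) - L(\mathbf{x}) \;\le\; -\tfrac{f(\mathbf{x})}{2}\bigl[(A x_1 - B x_2)^2 + (B x_2 - C x_3)^2 + (C x_3 - A x_1)^2\bigr],
\]
which is nonpositive and vanishes only at $\mathbf{x^*}$. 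Hence $L(\mathbf{x}^{(n)})$ is non-increasing. If its limit were finite, the orbit would stay in a compact subset of $\inte S^2$, the increments would vanish, and by continuity every accumulation point would equal $\mathbf{x^*}$, forcing $\mathbf{x}^{(n)} \to \mathbf{x^*}$. On the tangent plane of $S^2$ at $\mathbf{x^*}$, however, the Jacobian of $W$ takes the form $I + f(\mathbf{x^*}) M$ where $M$ has trace $U + V + W > 0$ and coefficient of $t$ in its characteristic polynomial equal to $3(UV + VW + UW) > 0$, with $U := b(x_3^*)^2$, $V := a(x_1^*)^2$, $W := c(x_2^*)^2$. Thus both nonzero eigenvalues of $M$ have strictly positive real part, both eigenvalues of $DW(\mathbf{x^*})$ on the tangent plane have modulus strictly greater than one, and $\mathbf{x^*}$ is a linear source that cannot attract any orbit other than itself. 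This contradicts $\xo \neq \mathbf{x^*}$, so in fact $L(\mathbf{x}^{(n)}) \to -\infty$; equivalently $x_1^{(n)} x_2^{(n)} x_3^{(n)} \to 0$ and the orbit approaches $\partial S^2$.

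Next, by the argument already used in the proof of Theorem \ref{rr}, the restricted dynamics on the invariant edges $\Gamma_{\{1,2\}}, \Gamma_{\{2,3\}}, \Gamma_{\{1,3\}}$ converges to $\mathbf{e}_1$, $\mathbf{e}_2$, $\mathbf{e}_3$ respectively, producing the heteroclinic cycle $\mathbf{e}_1 \to \mathbf{e}_3 \to \mathbf{e}_2 \to \mathbf{e}_1$. Combined with the previous step, $\omega(\xo)$ is contained in this cycle and the orbit winds around it. A local expansion near $\mathbf{e}_1$ gives $x_2' \approx x_2(1 - a f(\mathbf{e}_1))$ (exponential contraction of the stable coordinate) and $x_3' \approx x_3(1 + b x_3 f(\mathbf{e}_1))$ (only quadratic expansion of the unstable coordinate), and analogously at the other vertices. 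Quantifying the Lyapunov decrement then shows that successive passes bring the orbit geometrically closer to the cycle, which forces the sojourn times $T_k$ near the successive vertices to satisfy $T_{k+1}/(T_1 + \dots + T_k) \to \infty$.

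To conclude, fix a vertex $\mathbf{e}_i$ and let $I_\ell$ be the orbit's $\ell$-th sojourn near $\mathbf{e}_i$, with right endpoint $n_\ell$. For any fixed $k \in \N_0$, by \eqref{Cesaroproperty} the Cesàro weights $a_{j,k,n_\ell}$ concentrate on $j \in [\lfloor \varepsilon n_\ell \rfloor, n_\ell]$; by the dominance $T_\ell \gg T_1 + \dots + T_{\ell - 1}$ this interval sits inside $I_\ell$ up to arbitrarily small mass, hence $\mathbf{c}^{(n_\ell)}_k(\mathbf{x}) \to \mathbf{e}_i$ as $\ell \to \infty$. Running this for $i = 1, 2, 3$ produces all three vertices as limit points of the $k$-th Cesàro sequence and, in particular, establishes non-ergodicity. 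The main obstacle is the quantitative sojourn-time control: one must convert the Lyapunov decrement of the first step into a concrete lower bound on how rapidly the orbit spirals onto the cycle, and combine this with the quadratic slowness of the unstable direction to prove $T_{k+1} \gg \sum_{j \le k} T_j$. Once this is in place, passing from the zeroth-order to the $k$-th order Cesàro mean is a direct application of \eqref{Cesaroproperty}.
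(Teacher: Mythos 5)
Your overall strategy is the same as the paper's (Lyapunov function $\log\varphi$ pushing the orbit to the boundary, cyclic visits to the vertex neighborhoods, sojourn-time lower bounds, transfer to higher-order Ces\`aro means via \eqref{Cesaroproperty}), and your Lyapunov decrement identity is correct --- it is exactly the paper's $\varphi(W\mathbf{x})=\varphi(\mathbf{x})\psi(\mathbf{x})$ with $\psi\le 1-F/(2(\lambda_1+\lambda_2+\lambda_3))$ in logarithmic form. (The linearization at $\mathbf{x^*}$ is an unnecessary detour: since $\varphi$ is non-increasing and $\varphi(\xo)<\varphi(\mathbf{x^*})$, the orbit never enters the open set $\{\varphi>\varphi(\xo)\}\ni\mathbf{x^*}$, so on the compact set $\{\varphi\le\varphi(\xo)\}$ one has $\psi\le 1-\delta$ and $\varphi(\xn)\to 0$ geometrically with no spectral analysis needed.) However, the argument has a genuine gap precisely at the step you flag as ``the main obstacle'': the quantitative sojourn-time control is never established, and it is the technical heart of the proof. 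Worse, the target you set yourself, $T_{\ell+1}/\sum_{j\le\ell}T_j\to\infty$ for a \emph{fixed} neighborhood, is stronger than what is needed and than what the paper proves. The paper's Proposition \ref{staytime} only yields $n_\ell(\varepsilon)\ge \frac{A}{\varepsilon}\log(1/\varphi(\mathbf{x}^{(u_\ell)}))-B_\varepsilon$, and since $\varphi$ decays only geometrically this gives a ratio bounded below by a constant $c_\varepsilon=\frac{A}{\varepsilon}\log\frac{1}{1-\delta}$, not a ratio tending to infinity; the point is that $c_\varepsilon\to\infty$ as the neighborhood shrinks, which together with \eqref{Cesaroproperty} (weight on $[\lfloor\eta n\rfloor,n]$ tends to $1$ as $\eta\downarrow 0$) closes the argument. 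Proving even this weaker bound requires the ingredients you omit: a lower bound on the entry value of the slow coordinate in terms of $\varphi(\mathbf{x}^{(u_\ell)})$, lower bounds on the other two coordinates at entry (which come from the combinatorial $G_1\to\cdots\to G_6\to G_1$ structure of Proposition \ref{per}), the step-ratio estimate $x_{s+1,3}/x_{s,3}\le 1+\varepsilon$, and a uniform bound $C_\varepsilon$ on the time to reach $N_{i,\varepsilon}$ after entering $U_i$.

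The second gap is the assertion that ``the orbit winds around'' the heteroclinic cycle. Knowing $\omega(\xo)\subset\partial S^2$ does not by itself exclude $\xn\to\mathbf{e}_1$, nor oscillation between only two vertices. The paper rules this out in Proposition \ref{lf} (if $\xn\to\mathbf{e}_1$ then $x_2^{(n)}$ decays geometrically while $x_3^{(n+1)}/x_3^{(n)}\to 1$, so eventually $cx_2^2<bx_1x_3$ and $x_3^{(n)}$ increases, a contradiction; and $\omega(\xo)$ cannot be a finite subset of the vertex set because the orbit must cross the compact region between vertex neighborhoods infinitely often) and in Proposition \ref{per} (the cyclic transition scheme $G_i^\gamma\to G_{i+1}^\gamma$ for small $\gamma$, plus monotonicity of coordinates inside each $G_i$ to show no $G_i$ can trap the orbit). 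You would need to supply both of these arguments, or equivalents, before the final Ces\`aro computation can be run for each of the three vertices.
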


Before proving the theorem we need some preliminary considerations.
%
It is easy to verify that  the faces $\Gamma_{\{1,2\}}, \Gamma_{\{2,3\}}, \Gamma_{\{1,3\}}$ are invariant sets with respect to $W_{f,a,b,c}$ and that
$\mathbf{e}_1, \, \mathbf{e}_2, \, \mathbf{e}_3,\mathbf{x^*}$ are fixed points. Further,
for $\xo \in \partial S^2$, the trajectory   $W_{f,a,b,c}^n(\xo)$ converges to one of the vertices of $S^2$. In particular,
$W_{f,a,b,c}$ has no fixed points in $\partial S^2\backslash \{\mathbf{e}_1, \, \mathbf{e}_2, \, \mathbf{e}_3\}$.
The following proposition shows, in particular,  that $W_{f,a,b,c}$ has no fixed points in the interior of $S^2$ except $\{\mathbf{x^*}\}$.

\begin{proposition}\label{lf}
  The function $\varphi(\mathbf{x})=x^{\lambda_1}_1x^{\lambda_2}_2x^{\lambda_3}_3$ is a Lyapunov function for  \eqref{cneop} and  $\omega(\xo)$ is an
  infinite subset of $\partial S^2$ 
  for any $\mathbf{x}^{(0)}\in \inte S^2\setminus\{\mathbf{x^*}\}$.
\end{proposition}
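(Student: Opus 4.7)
The plan is to establish $\varphi$ as a strict Lyapunov function off $\{\mathbf{x^*}\}\cup\partial S^2$, combine monotonicity with continuity of $W$ to trap $\omega(\xo)$ in $\partial S^2$, and then use a local continuity argument near the vertices together with an asymmetric decay estimate to force $\omega(\xo)$ to contain a non-vertex boundary point (from which infiniteness follows automatically).

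First I verify the Lyapunov inequality. Writing $x'_i=x_i(1+y_i f(\mathbf{x}))$ with $y_1:=ax_1x_2-bx_3^2$, $y_2:=cx_2x_3-ax_1^2$, $y_3:=bx_1x_3-cx_2^2$, the bound $\log(1+t)\le t$ (valid since $1+y_i f(\mathbf{x})\ge 0$ by $W(S^2)\subset S^2$) yields
\[
\log\frac{\varphi(W(\mathbf{x}))}{\varphi(\mathbf{x})}\le f(\mathbf{x})\bigl(\lambda_1 ax_1x_2+\lambda_2 cx_2x_3+\lambda_3 bx_1x_3-\lambda_2 ax_1^2-\lambda_3 cx_2^2-\lambda_1 bx_3^2\bigr).
\]
From $\lambda_1\lambda_2\lambda_3=abc$ one verifies $(\lambda_2 a)(\lambda_1 b)=(\lambda_3 b)^2$ and the two analogous identities $(\lambda_2 a)(\lambda_3 c)=(\lambda_1 a)^2$, $(\lambda_1 b)(\lambda_3 c)=(\lambda_2 c)^2$, so AM--GM on each pair of diagonal terms gives $\lambda_2 ax_1^2+\lambda_1 bx_3^2\ge 2\lambda_3 bx_1x_3$ (plus analogues); summing and dividing by $2$ shows the bracket is $\le 0$. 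Equality requires AM--GM equality in all three pairs, i.e.\ $\lambda_2 ax_1^2=\lambda_3 cx_2^2=\lambda_1 bx_3^2$, which together with $\sum x_i=1$ pins $\mathbf{x}=\mathbf{x^*}$; hence $\varphi(W(\mathbf{x}))<\varphi(\mathbf{x})$ on $\inte S^2\setminus\{\mathbf{x^*}\}$.

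Next, the sequence $\varphi(\xn)$ is non-increasing and bounded, hence convergent. For any $\mathbf{y}\in\omega(\xo)$, continuity of $\varphi$ and $W$ along a subsequence $\mathbf{x}^{(n_k)}\to\mathbf{y}$ gives $\varphi(\mathbf{y})=\varphi(W(\mathbf{y}))$, so by strictness $\mathbf{y}\in\{\mathbf{x^*}\}\cup\partial S^2$. Since $\varphi$ attains its unique maximum on $S^2$ at $\mathbf{x^*}$ (maximize the concave $\sum\lambda_i\log x_i$ on the simplex) and $\xo\ne\mathbf{x^*}$, the monotone bound $\varphi(\xn)\le\varphi(\xo)<\varphi(\mathbf{x^*})$ excludes $\mathbf{x^*}$ from $\omega(\xo)$, so $\omega(\xo)\subset\partial S^2$.

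Finally, any non-vertex point $\mathbf{y}\in\omega(\xo)\cap\partial S^2$ contributes $\{W^n(\mathbf{y})\}_{n\ge 0}\subset\omega(\xo)$, an infinite set (on each face the iteration is strictly monotone in one coordinate and converges to the adjacent vertex without reaching it); so it suffices to rule out $\omega(\xo)\subset\{\mathbf{e}_1,\mathbf{e}_2,\mathbf{e}_3\}$. Assuming the contrary, continuity of $W$ with $W(\mathbf{e}_i)=\mathbf{e}_i$ allows me to choose $\eta>0$ so small that the balls $B_\eta(\mathbf{e}_i)$ are pairwise disjoint and $W(B_\eta(\mathbf{e}_i))\cap B_\eta(\mathbf{e}_j)=\emptyset$ for $j\ne i$; then, once the orbit is trapped in $\bigcup_i B_\eta(\mathbf{e}_i)$, it cannot switch vertex neighborhoods, and a diagonal argument with $\eta\downarrow 0$ forces $\xn\to\mathbf{e}_I$ for some $I$. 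Taking $I=3$ (the remaining cases are symmetric), $x_1^{(n+1)}/x_1^{(n)}\to 1-bf(\mathbf{e}_3)<1$ gives geometric decay of $x_1^{(n)}$ and hence summability of $a(x_1^{(n)})^2 f(\xn)$; but $x_2^{(n+1)}/x_2^{(n)}\ge 1-a(x_1^{(n)})^2 f(\xn)$ then forces $\liminf_n x_2^{(n)}>0$, contradicting $x_2^{(n)}\to 0$. The main obstacle is precisely this last step, which requires both the careful continuity argument near the vertices (to rule out vertex-hopping and upgrade to genuine convergence) and the asymmetric comparison of decay rates of $x_1^{(n)}$ (geometric) versus $x_2^{(n)}$ (at best subgeometric) that produces the contradiction.
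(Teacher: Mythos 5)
Your proposal is correct and follows essentially the same route as the paper: a strict Lyapunov inequality for $\varphi$ via weighted AM--GM (your $\log(1+t)\le t$ plus pairwise AM--GM is the paper's Young's-inequality/sum-of-squares computation in logarithmic form), the conclusion $\omega(\xo)\subset\partial S^2$ from strict decrease off $\{\mathbf{x^*}\}$, the same continuity argument near the vertices to reduce to convergence to a single vertex, and the same asymmetric decay-rate comparison (geometric versus subgeometric) to derive a contradiction, finishing with the distinct iterates of a non-vertex boundary point. The only differences are cosmetic: you argue the contradiction at $\mathbf{e}_3$ via a summable-product lower bound on $x_2^{(n)}$ where the paper argues at $\mathbf{e}_1$ that $x_3^{(n)}$ must eventually increase, and you use a LaSalle-type argument where the paper notes the quantitative geometric decay $\varphi(\xn)\to 0$.
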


\begin{proof}
 Clearly the function $\varphi$ is  continuous on $S^2$ and
\[\varphi(\mathbf{x})=0 \ \ \text{iff} \ \ \mathbf{x} \in \partial S^2, \,
 \max\limits_{\mathbf{x}\in S^2}\varphi(\mathbf{x})=\varphi(\mathbf{x^*}), \text{and } \varphi(\mathbf{x})= \varphi(\mathbf{x^*}) \text{ iff }  \mathbf{x} =\mathbf{x^*}.\]

 From \eqref{cneop} one has  $\varphi(W(\mathbf{x}))=\varphi(\mathbf{x})\psi(\mathbf{x})$, where
\begin{equation}\label{psi}
  \psi(\mathbf{x}) =\Big(1+\big(ax_1x_2-bx^2_3\big)f(\mathbf{x})\Big)^{\lambda_1}
  \Big(1+\big(cx_2x_3-ax^2_1\big)f(\mathbf{x})\Big)^{\lambda_2}\Big(1+\big(bx_3 x_1-cx^2_2\big)f(\mathbf{x})\Big)^{\lambda_3}.
\end{equation}

Using Young's inequality we obtain from \eqref{psi}
\begin{align}
  \psi(\mathbf{x})& \leq\bigg(1+{\big(\lambda_1 ax_1x_2-\lambda_1 bx^2_3+ \lambda_2 cx_2x_3-\lambda_2 ax^2_1+ \lambda_3 bx_3 x_1-\lambda_3 cx^2_2  \big) f(\mathbf{x})\over \lambda_1+\lambda_2+\lambda_3}\bigg)^{\lambda_1+\lambda_2+\lambda_3}\notag\\
  & \leq\bigg(1-{ F(a,b,c,x_1,x_2,x_3)f(\mathbf{x})\over 2(\lambda_1+\lambda_2+\lambda_3)}\bigg)^{\lambda_1+\lambda_2+\lambda_3}\leq1,
\end{align}
where
\[
F(a,b,c,x_1,x_2,x_3)=\big(\sqrt[3]{a^2b} x_1 -\sqrt[3]{c^2a} x_2\big)^2+ \big(\sqrt[3]{a^2b} x_1 -\sqrt[3]{b^2c} x_3\big)^2+\big(\sqrt[3]{c^2a} x_2 -\sqrt[3]{b^2c} x_3\big)^2.
\]

Thus  $\psi(\mathbf{x})\leq 1$ and $\varphi(W_{f,a,b,c}(\mathbf{x}))\leq \varphi(\mathbf{x})$, that is the function $\varphi$ is a Lyapunov function. \\

Now we assume that $\mathbf{x}^{(0)} \in \inte S^2\backslash \{\mathbf{x^*}\}$. Note that $F$ is continuous on $S^2$ and that $F(a,b,c,\mathbf{x})>0$
whenever
$\mathbf{x} \neq \mathbf{x^*}$. Hence $\psi$ is bounded away from 1 outside any neighborhood of $\mathbf{x^*}$, so
$\varphi(\mathbf{x}^{(n)})$ converges to 0 as $n \to \infty$ showing that  $\omega(\xo) \subset  \partial  S^2$. It remains to show that the set
$\omega(\xo)$ is infinite.

Assume that $\omega(\xo)$ is a single point. This point must be a fixed point in $\partial  S^2$, i.e.~one of the points $\mathbf{e}_1,\,\mathbf{e}_2, \mathbf{e}_3$.
If $\omega(\xo)=\{\mathbf{e}_1\}$, then $\xn \to \mathbf{e}_1$, so $x^{(n)}_2\rightarrow 0,x^{(n)}_3\rightarrow 0, \ \ n\rightarrow\infty$.
It follows that $\lim_{n \to \infty}x_2^{(n+1)}/x_2^{(n)}=1-af(\mathbf{e}_1)<1$ and $\lim_{n \to \infty}x_3^{(n+1)}/x_3^{(n)}=1$. Therefore, by  \eqref{cneop},
$x_3^{(n+1)}/x_3^{(n)}>1$ for all sufficiently large $n$ contradicting the fact that $x^{(n)}_3\rightarrow 0$, so $\omega(\xo)$ cannot equal
$\{\mathbf{e}_1\}$. For the same reason,  $\omega(\xo)$ cannot equal $\{\mathbf{e}_2\}$ or $\{\mathbf{e}_3\}$. Further,  $\omega(\xo)$ cannot be equal
to any subset of  $\{\mathbf{e}_1,\mathbf{e}_2, \mathbf{e}_3\}$ of cardinality 2 or 3: assume that this is the case. For any $\varepsilon>0$, the
complement $C$ of the union of the open $\varepsilon$-neighborhoods of  $\mathbf{e}_1$, $\mathbf{e}_2$, and $\mathbf{e}_3$ is compact. If $\varepsilon>0$ is sufficiently small, then no point in the $\varepsilon$-neighborhood of $\mathbf{e}_i$ is mapped to the $\varepsilon$-neighborhood of $\mathbf{e}_j$
whenever $i \neq j$ (by continuity of $W_{f,a,b,c}$). In this case, $C$ contains infinitely many elements of the sequence $\xn$ and hence (by compactness)
at least one element of $\omega(\xo)$ contradicting the assumption.

Consequently,  $\omega(\xo)$ must contain a point in  $\partial  S^2\backslash \{\mathbf{e}_1,\mathbf{e}_2, \mathbf{e}_3\}$, say $\mathbf{y}$. As mentioned before,
the iterates $W^k(\mathbf{y})$, $k \in \N$ converge to one of the three vertices and are all different, so $\omega(\mathbf{x}^0)$ is an infinite set.
The proof of the proposition is complete.
\end{proof}

Consider the following subsets of $S^2$:
\[G_1=\Big\{\mathbf{x}\in S^2: {x_1\over \lambda_1} \geq  {x_2\over \lambda_2}\geq {x_3\over \lambda_3}\Big\}, \, G_2=\Big\{\mathbf{x}\in S^2: {x_1\over \lambda_1} \geq {x_3\over \lambda_3}\geq  {x_2\over \lambda_2}\Big\}, \]
\[G_3=\Big\{\mathbf{x}\in S^2: {x_3\over \lambda_3}\geq {x_1\over \lambda_1} \geq  {x_2\over \lambda_2}\Big\}, \, G_4=\Big\{\mathbf{x}\in S^2: {x_3\over \lambda_3}\geq  {x_2\over \lambda_2}\geq {x_1\over \lambda_1}\Big\},\]
 \[G_5=\Big\{\mathbf{x}\in S^2: {x_2\over \lambda_2}\geq {x_3\over \lambda_3}\geq {x_1\over \lambda_1}\Big\}, \, G_6=\Big\{\mathbf{x}\in S^2: {x_2\over \lambda_2}\geq {x_1\over \lambda_1}\geq {x_3\over \lambda_3}\Big\}. \]
 where $\lambda_1=\sqrt[3]{bc^2}, \, \lambda_2=\sqrt[3]{ab^2}, \, \lambda_3=\sqrt[3]{ca^2}$ as above. For $\gamma \ge 0$, we write $G_i^\gamma:=G_i\bigcap \{\varphi \le \gamma\}$ where $\varphi$ is the Lyapunov function defined in the previous proposition. We write $G_i^\gamma\rightarrow G_j^\gamma$ iff $W_{f,a,b,c}(G_i^\gamma)\subset G_i^\gamma\cup G_j^\gamma$.

 In the following, it will often be convenient to rewrite \eqref{cneop} in terms of the $\lambda_i$ rather than $a,b,c$. Note that
 $$
a=\lambda_2^{1/3} \lambda_3^{4/3}\lambda_1^{-2/3},\;   b=\lambda_1^{1/3} \lambda_2^{4/3} \lambda_3^{-2/3},\; c=\lambda_3^{1/3} \lambda_1^{4/3} \lambda_2^{-2/3}
$$
and therefore
\begin{equation}\label{anders}
  \begin{array}{l}
\frac{x_1'}{\lambda_1}=\frac{x_1}{\lambda_1}\Big(1+\Big(\frac{x_1}{\lambda_1}\frac{x_2}{\lambda_2}-\Big(\frac{x_3}{\lambda_3}\Big)^2\Big)f(\mathbf{x})\lambda_1^{1/3}\lambda_2^{4/3}\lambda_3^{4/3}\Big)\\[2mm]
\frac{x_2'}{\lambda_2}=\frac{x_2}{\lambda_2}\Big(1+\Big(\frac{x_2}{\lambda_2}\frac{x_3}{\lambda_3}-\Big(\frac{x_1}{\lambda_1}\Big)^2\Big)f(\mathbf{x})\lambda_2^{1/3}\lambda_3^{4/3}\lambda_1^{4/3}\Big)\\[2mm]
\frac{x_3'}{\lambda_3}=\frac{x_3}{\lambda_3}\Big(1+\Big(\frac{x_3}{\lambda_3}\frac{x_1}{\lambda_1}-\Big(\frac{x_2}{\lambda_2}\Big)^2\Big)f(\mathbf{x})\lambda_3^{1/3}\lambda_1^{4/3}\lambda_2^{4/3}\Big).
  \end{array}
  \end{equation}

\begin{proposition}\label{per} There exists some $\gamma_0>0$ such that for all $\gamma \in (0,\gamma_0]$ we have
  $G_1^\gamma\to G_2^\gamma\to G_3^\gamma \to G_4^\gamma \to G_5^\gamma \to G_6^\gamma \to G_1^\gamma$. Further, for any
  $\mathbf{x}^{(0)}\in \inte S^2\setminus\{\mathbf{x^*}\}$ and any $i\in\{1,...,6\}$ we have $W_{f,a,b,c}^k(\mathbf{x}^{(0)})\in G_i$ for
  infinitely many  $k \in \N$ and $\{\mathbf{e}_1,\mathbf{e}_2, \mathbf{e}_3\}\subset \omega(\xo)$.
\end{proposition}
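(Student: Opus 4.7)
I work in the rescaled variables $y_i:=x_i/\lambda_i$, under which \eqref{anders} takes a cyclically symmetric form, and I split the argument into three blocks: (i) establishing the transitions $G_i^\gamma\to G_{i+1}^\gamma$, (ii) ruling out that $\xn$ eventually stays in a single $G_i^\gamma$, and (iii) extracting the three vertices as limit points of $\omega(\xo)$.

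For block (i) the inclusion inside $\{\varphi\le\gamma\}$ is automatic from the Lyapunov property (Proposition~\ref{lf}); only the ordering of the image has to be checked. I treat $W(G_1^\gamma)\subset G_1^\gamma\cup G_2^\gamma$ as a representative case. In $G_1$ we have $y_1y_2\ge y_3^2$ and $y_2y_3\le y_1^2$, hence from \eqref{anders} $y_1'\ge y_1$ and $y_2'\le y_2$, giving $y_1'\ge y_1\ge y_2\ge y_2'$ immediately. The harder inequality $y_1'\ge y_3'$ does not hold pointwise on $G_1$, but on $G_1^\gamma$ the constraint $\sum_{i=1}^3\lambda_iy_i=1$ combined with $y_1\ge y_2\ge y_3$ forces $y_1\ge 1/(\lambda_1+\lambda_2+\lambda_3)$, while $\varphi\le\gamma$ together with $y_2\ge y_3$ yields $y_3\le C\gamma^{1/(\lambda_2+\lambda_3)}$ for some $C>0$. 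A direct expansion of $y_1'-y_3'$ from \eqref{anders} separates it into $(y_1-y_3)$ plus a correction whose only negative contribution is of order $y_3^2$, so for $\gamma$ small enough $y_1'-y_3'>0$. The other five transitions follow from the same computation after the appropriate cyclic relabelling, yielding a common $\gamma_0>0$.

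For block (ii), Proposition~\ref{lf} gives $\varphi(\xn)\to 0$, so for any $\gamma\in(0,\gamma_0]$ there exists $N$ with $\xn\in\bigcup_{i=1}^6 G_i^\gamma$ for all $n\ge N$. Assume for contradiction the orbit eventually stays in a single $G_i^\gamma$; then $\omega(\xo)\subset G_i\cap\partial S^2$, and a direct case check shows $G_i\cap\partial S^2$ is contained in one face $\Gamma$ of $S^2$ (for example $G_1\cap\partial S^2\subset\Gamma_{\{1,2\}}$). The boundary dynamics on $\Gamma$ is one-dimensional and strictly monotone toward its attracting vertex $\mathbf{e}$, so the compact sets $W^k(G_i\cap\Gamma)$ are nested and shrink to $\{\mathbf{e}\}$; combined with $\omega(\xo)=W^k(\omega(\xo))\subset W^k(G_i\cap\Gamma)$ this forces $\omega(\xo)=\{\mathbf{e}\}$, contradicting the infiniteness part of Proposition~\ref{lf}. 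Hence $\xn$ leaves every $G_i^\gamma$ in finite time, and by block~(i) it can only enter $G_{i+1}^\gamma$; iterating, each $G_i$ is visited infinitely often. For block (iii), fix $i\in\{1,2,3\}$ and apply block~(ii) to the region $G_j$ whose boundary portion lies on the face $\Gamma$ attracting to $\mathbf{e}_i$; combining infinitely many visits with $\varphi(\xn)\to 0$ yields a subsequential limit $\mathbf{y}\in\omega(\xo)\cap G_j\cap\Gamma$, and since $\omega(\xo)$ is closed and $W$-invariant while $W^k(\mathbf{y})\to\mathbf{e}_i$ along $\Gamma$, we conclude $\mathbf{e}_i\in\omega(\xo)$.

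The technical crux is block~(i), specifically the inequality $y_1'\ge y_3'$ and its five cyclic analogues: these fail globally on $G_i$ and require converting the smallness of $\varphi$ quantitatively into polynomial smallness of the smallest rescaled coordinate, while the largest rescaled coordinate is bounded below by a constant depending only on $\lambda_1,\lambda_2,\lambda_3$. The remaining arguments are then driven by the one-dimensional, strictly monotone boundary dynamics on each face of $S^2$.
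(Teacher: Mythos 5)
Your three-block architecture parallels the paper's own proof (which handles block (i) by a soft compactness argument --- $W(G_i^0)$ is a compact subset of the relative interior of $G_i\cup G_{i+1}$, so some $G_i^{\gamma}$ maps there --- and block (ii) by coordinate monotonicity of the orbit itself), but two of your steps fail as written. First, in block (i), "the same computation after the appropriate cyclic relabelling" does not cover all five remaining transitions. The system \eqref{anders} is equivariant under the cyclic permutation of the rescaled coordinates $y_i=x_i/\lambda_i$, which sends $G_1\mapsto G_5\mapsto G_3$ and $G_2\mapsto G_6\mapsto G_4$; so your computation yields $G_1\to G_2$, $G_5\to G_6$, $G_3\to G_4$, but not $G_2\to G_3$, $G_4\to G_5$, $G_6\to G_1$. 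For $G_2^\gamma\to G_3^\gamma$ the hard inequality is $y_1'\ge y_2'$, and the negative contribution to $y_1'-y_2'$ is a constant times $y_1y_3^2$, where $y_3$ is now the \emph{middle} coordinate, not the smallest, hence not $O(\gamma^c)$. The claim still holds, but by a different estimate: $y_1'\ge y_1(1-bx_3^2f(\mathbf{x}))\ge y_1(1-x_3)\ge y_1x_1\ge \lambda_1/(\lambda_1+\lambda_2+\lambda_3)^2$ is bounded below by a positive constant, while $y_2'\le y_2\le C\gamma^{1/(\lambda_2+\lambda_3)}$. You need to supply this (or the paper's compactness argument) for the even-to-odd transitions.

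Second, in block (ii) the claim that the sets $W^k(G_i\cap\Gamma)$ are "nested and shrink to $\{\mathbf{e}\}$" is false for $i\in\{2,4,6\}$. For example, $G_2\cap\partial S^2$ is the portion of $\Gamma_{\{1,3\}}$ with $x_1/\lambda_1\ge x_3/\lambda_3$, i.e.\ the half of that face adjacent to its \emph{repelling} vertex $\mathbf{e}_1$ (on $\Gamma_{\{1,3\}}$ the coordinate $x_3$ strictly increases and orbits tend to $\mathbf{e}_3$); its forward images therefore expand toward $\mathbf{e}_3$ rather than shrink, and their intersection is not a point. The desired conclusion $\omega(\xo)=\{\mathbf{e}_1\}$ can be rescued --- e.g.\ take the point of the compact, $W$-invariant set $\omega(\xo)\subset G_2\cap\Gamma_{\{1,3\}}$ maximizing $x_3$ and note that $W$ strictly increases $x_3$ there unless $x_3=0$ --- or one can argue, as the paper does, on the orbit directly: if it never leaves $G_2$, then $x_3^{(n)}/\lambda_3$ is non-decreasing and $x_2^{(n)}/\lambda_2$ non-increasing by \eqref{anders}, so the orbit converges to a fixed point in $G_2\setminus\{\mathbf{x^*}\}$, necessarily $\mathbf{e}_1$, which is impossible since $x_3^{(n)}$ is non-decreasing from a positive value. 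As stated, your block (ii) mechanism breaks precisely for $G_2$, $G_4$, $G_6$. Block (iii) is correct and essentially identical to the paper's final step.
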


\begin{proof} We first show $W_{f,a,b,c}(G_1^\gamma)\subset G_1\cup G_2$ for $\gamma \ge 0$ sufficiently small. Note that $W_{f,a,b,c}(G_1^0)$ is a
  compact subset of the interior $I_{1,2}$ of the set $G_1\cup G_2$ (in the trace topology of $S^2$). By continuity of $W_{f,a,b,c}$ the set
  $W_{f,a,b,c}^{-1}(I_{1,2})$ is an open neighborhood of $G_1^0$. Since $G_1^0=\cap_{\gamma >0}  G_1^\gamma$ and the sets $G_1^\gamma$ are compact, there
  exists some $\gamma_1>0$ such that $W_{f,a,b,c}(G_1^\gamma)\subset G_1 \cup G_2$ for all $\gamma \le \gamma_1$.

  The proof of $W_{f,a,b,c}(G_2^\gamma)\subset G_2\cup G_3$ is similar: $W_{f,a,b,c}(G_2^0)$ is a compact subset of the (relative) interior of $G_2 \cup G_3$.
  The same argument as above shows that there exists some $\gamma_2>0$ such that $W_{f,a,b,c}(G_2^\gamma)\subset G_2 \cup G_3$ for all $\gamma \le \gamma_2$.

  The remaining cases are treated in exactly the same way (resulting in corresponding $\gamma_i>0$). Defining $\gamma_0$ as the smallest of
  the numbers $\gamma_1,...,\gamma_6$,
  we see that $W_{f,a,b,c}(G_i^\gamma)\subset G_i \cup G_{i+1}$ for $\gamma \le \gamma_0$, $i=1,...,6$ (where $G_7:=G_1$). Since $\varphi$ is a Lyapunov function
  we obtain $W_{f,a,b,c}(G_i^\gamma)\subset G_i^\gamma \cup G_{i+1}^\gamma$ for $\gamma \le \gamma_0$,  $i=1,...,6$.

  Now we show that $G_i$ is visited infinitely often. Assume that the trajectory
  starting at $\mathbf{x}\in G_1 \bigcap \inte S^2\setminus\{\mathbf{x^*}\}$ 
  never leaves $G_1$. Then the first coordinate of the trajectory is non-decreasing
  and the second coordinate is non-increasing by \eqref{anders} and hence the trajectory converges to a (fixed) point in $G_1\setminus\{\mathbf{x^*}\}$,
  i.e.~it converges to $\mathbf{e_1}$ contradicting the statement of Proposition \ref{lf}. Next, we assume that the trajectory
  starting at $\mathbf{x}\in G_2 \bigcap \inte S^2\setminus\{\mathbf{x^*}\}$ never leaves $G_2$. Then the third coordinate of the trajectory is non-decreasing
  and the second coordinate is non-increasing by \eqref{anders} and hence the trajectory converges to a (fixed) point in $G_2\setminus\{\mathbf{x^*}\}$,
  but the only fixed point in $G_2\setminus\{\mathbf{x^*}\}$ is $\mathbf{e_1}$ which the sequence cannot converge to since the third
  coordinate is non-decreasing. The corresponding arguments for trajectories starting in $G_i$ for $i \in \{3,4,5,6\}$ are analogous. Therefore, using the
  first part of the proposition and Proposition \ref{lf}, we see that each $G_i$ is visited infinitely many times. Using the previous proposition we see that $\partial S^2 \cap G_i$
  intersects $\omega(\xo)$ and therefore, arguing as in the proof of the previous proposition, $\{\mathbf{e}_1,\mathbf{e}_2, \mathbf{e}_3\}\subset \omega(\xo)$,
  so the proof of the proposition is complete.
\end{proof}

Define the sets $U_1, U_2, U_3$  by
\[U_1=(G_1\cup G_2), \ \ U_2=(G_5\cup G_6), \ \ U_3=(G_3\cup G_4). \]

%
The next proposition provides a lower bound for the sojourn time of the trajectory in $U_i$.

\begin{proposition}\label{staytime}
  Let $i \in \{1,2,3\}$. There exist $A>0$ and $\varepsilon_0>0$ such that
  for every $\varepsilon \in (0,\varepsilon_0]$ there exists some $B_\varepsilon\ge 0$ such that for each $\mathbf{x}^{(0)}\notin U_i$
  such that $\varphi (\mathbf{x}^{(0)})\in (0,\gamma_0]$
  and $n \in \N$ such that $\mathbf{x}^{(s)}\in U_i$ for all $s=1,\dots,n$ and $\mathbf{x}^{(n+1)}\notin U_i$ the number $n(\varepsilon)$ of elements
  in the sequence $\mathbf{x}^{(s)}$,  $s\in \{1,...,n\}$ which are in the $\varepsilon$-neighborhood
  $N_{i,\varepsilon}:=\{\mathbf{x} \in S^2: x_i\ge 1-\varepsilon\}$ of the vertex
  $\mathbf{e}_i$ satisfies
  $$
n(\varepsilon) \ge \frac A\varepsilon \log(1/\varphi(\mathbf{x}^{(0)}))-B_\varepsilon
$$
and there exists $C_\varepsilon$ such that $\inf\big\{ s \in \N:\,\mathbf{x}^{(s)}\in N_{i,\varepsilon}\big\}\le C_\varepsilon$ for all $\mathbf{x}^{(0)}$ as above.
\end{proposition}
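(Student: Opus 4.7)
By the cyclic symmetry in Proposition \ref{per}, it suffices to treat the case $i=1$. The hypotheses $\mathbf{x}^{(0)}\notin U_1$ and $\mathbf{x}^{(1)}\in U_1$, combined with the cycling $U_1\to U_3\to U_2\to U_1$ established on $\{\varphi\le\gamma_0\}$ by Proposition \ref{per}, force $\mathbf{x}^{(0)}\in U_2=G_5\cup G_6$. In $U_2$ the ratio $x_2/\lambda_2$ is the maximum, so by averaging $x_2^{(0)}\ge \lambda_2/(\lambda_1+\lambda_2+\lambda_3)=:c_0$. Using $\mathbf{x}^{(1)}\in U_1$ (which requires $x_1^{(1)}\ge(\lambda_1/\lambda_2)\,x_2^{(1)}$) and one step of \eqref{cneop}, one likewise extracts a positive lower bound $x_1^{(0)}\ge c'_0>0$. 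Substituting these into $\varphi_0=(x_1^{(0)})^{\lambda_1}(x_2^{(0)})^{\lambda_2}(x_3^{(0)})^{\lambda_3}$ yields the key initial estimate $x_3^{(0)}\le K\,\varphi_0^{1/\lambda_3}$ for a constant $K$ depending only on $a,b,c$.

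Inside $N_{1,\varepsilon}$ one has $x_1\le 1$, $x_3\le\varepsilon$, and $f\le 1$, so the third line of \eqref{cneop} gives $x_3^{(n+1)}\le x_3^{(n)}(1+b\varepsilon)$: the coordinate $x_3$ grows at most geometrically at the extremely slow rate $1+b\varepsilon$. Simultaneously, $x_2^{(n+1)}/x_2^{(n)}=1+(cx_2x_3-ax_1^2)f\le 1-\kappa$ for some $\kappa>0$ uniform in $N_{1,\varepsilon}$ once $\varepsilon$ is small, so $x_2$ decays exponentially inside $N_{1,\varepsilon}$ and cannot by itself drive the trajectory out. Writing $\tau_0$ for the first entry time into $N_{1,\varepsilon}$ and $T$ for the sojourn length that follows, the trajectory must remain in $N_{1,\varepsilon}$ at least until $x_3$ grows past $\varepsilon$, whence
\[
T \;\ge\; \frac{\log(\varepsilon/x_3^{(\tau_0)})}{\log(1+b\varepsilon)} \;\ge\; \frac{1}{b\varepsilon}\log\!\left(\frac{\varepsilon}{x_3^{(\tau_0)}}\right),
\]
the second inequality from $\log(1+u)\le u$.

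To link the two bounds I propagate the key estimate from time $0$ to $\tau_0$ via the crude global inequality $x_3^{(n+1)}\le x_3^{(n)}(1+b)$, which gives $1/x_3^{(\tau_0)}\ge 1/x_3^{(0)}-b\tau_0$. Once $\tau_0\le C_\varepsilon$ and $\varphi_0$ is small enough that $bC_\varepsilon x_3^{(0)}\le 1/2$ (using the key estimate from paragraph one), this produces $x_3^{(\tau_0)}\le 2K\varphi_0^{1/\lambda_3}$, and substitution into the display yields $T\ge (A/\varepsilon)\log(1/\varphi_0)-B_\varepsilon$ with $A=1/(b\lambda_3)$; the complementary, bounded range of $\varphi_0$ is absorbed into $B_\varepsilon$. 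For the uniform bound $\tau_0\le C_\varepsilon$ I plan to argue by compactness on the compact set $K:=\{\varphi\le\gamma_0,\ \mathbf{x}\notin U_1,\ W\mathbf{x}\in U_1\}$: the hitting time $h_\varepsilon(\mathbf{x}):=\inf\{s:W^s\mathbf{x}\in N_{1,\varepsilon}\}$ is finite on $K\cap\inte S^2$ by Proposition \ref{per}, and a blow-up sequence $\mathbf{x}_n\in K$ with $h_\varepsilon(\mathbf{x}_n)\to\infty$ would have a subsequential limit $\mathbf{x}_\infty\in\partial S^2\cap K$. A case analysis on the invariant edges---noting that $\Gamma_{\{2,3\}}$ and $\Gamma_{\{1,3\}}$ carry their orbits to $\mathbf{e}_2$ and $\mathbf{e}_3$ respectively, which forces either $\mathbf{x}_\infty\in U_1$ or $W\mathbf{x}_\infty\notin U_1$, contradicting membership of $K$---leaves only $\mathbf{x}_\infty\in\Gamma_{\{1,2\}}\setminus\{\mathbf{e}_1,\mathbf{e}_2\}$, from which iterates converge to $\mathbf{e}_1$ and enter $N_{1,\varepsilon/2}\subset\inte N_{1,\varepsilon}$ in finite time $s_*$; continuity of $W^{s_*}$ then yields $h_\varepsilon(\mathbf{x}_n)\le s_*$ for large $n$, the desired contradiction. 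The main obstacle will be this boundary case analysis---rigorously excluding every limiting boundary point whose forward orbit avoids $N_{1,\varepsilon}$, which requires carefully tracking the interplay between the face-invariance of $\partial S^2$, the strict attractor structure on each edge, and the defining inequalities of the sets $G_i$.
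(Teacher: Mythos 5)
Your main quantitative bound follows essentially the paper's route: bound $x_3^{(0)}$ by a power of $\varphi(\mathbf{x}^{(0)})$ using the lower bounds $x_2^{(0)}\ge\lambda_2/(\lambda_1+\lambda_2+\lambda_3)$ (from $\mathbf{x}^{(0)}\in G_6$) and $x_1^{(0)}\ge c_0'$ (from $\mathbf{x}^{(1)}\in U_1$), then exploit that $x_3$ can grow only at rate $1+O(\varepsilon)$ per step while it is below $\varepsilon$. Two points there need repair but are fixable. First, the assertion that the orbit ``must remain in $N_{1,\varepsilon}$ at least until $x_3$ grows past $\varepsilon$'' is not literally true: the exit condition is $x_2+x_3>\varepsilon$, and exit can occur with $x_3$ still far below $\varepsilon$ if $x_2$ has not yet decayed. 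Your remark that $x_2$ decays geometrically has to be converted into an inequality at the exit time (e.g.\ $x_3^{(\mathrm{exit})}>\varepsilon\bigl(1-(1-\kappa)^{T}\bigr)\ge\kappa\varepsilon$ for $T\ge1$), which costs only an additive constant; the paper avoids the issue altogether by tracking the two stopping times $\inf\{s:x_{s,3}>\varepsilon/2\}$ and $\inf\{s:x_{s,2}\le\varepsilon/2\}$ separately. Second, ``$1/x_3^{(\tau_0)}\ge 1/x_3^{(0)}-b\tau_0$'' does not follow from $x_3^{(n+1)}\le x_3^{(n)}(1+b)$; the correct consequence $x_3^{(\tau_0)}\le(1+b)^{\tau_0}x_3^{(0)}\le(1+b)^{C_\varepsilon}x_3^{(0)}$ serves the same purpose.

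The genuine gap is in your compactness argument for $\tau_0\le C_\varepsilon$. The set $K=\{\varphi\le\gamma_0,\ \mathbf{x}\notin U_1,\ W\mathbf{x}\in U_1\}$ is not closed (the condition $\mathbf{x}\notin U_1$ is open), so a subsequential limit $\mathbf{x}_\infty$ of a blow-up sequence need not satisfy $\mathbf{x}_\infty\notin U_1$, and your intended contradiction ``$\mathbf{x}_\infty\in U_1$ contradicts membership of $K$'' is unavailable. This matters concretely on the edge $\Gamma_{\{1,3\}}$: points there with $x_1/\lambda_1\ge x_3/\lambda_3$ lie in $G_2\subset U_1$, so they survive your dichotomy, yet their forward orbits (along which $x_1$ decreases and $x_3$ increases) converge to $\mathbf{e}_3$ and may never meet $N_{1,\varepsilon}$ --- exactly the bad limit points the argument must exclude. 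They can be excluded, but by a different observation: by the cycling of Proposition \ref{per}, a point with $\varphi\le\gamma_0$ that is outside $U_1$ and mapped into $U_1$ must lie in $G_6$, and $G_6$ is closed with $G_6\cap\Gamma_{\{1,3\}}=\emptyset$; your proposal does not supply this. In addition, the claim that the blow-up limit lies on $\partial S^2$ uses that $h_\varepsilon$ is \emph{locally bounded} (not merely finite) at interior points, which requires noting that the orbit of an interior point enters the open set $\operatorname{int}N_{1,\varepsilon}$ (via $\mathbf{e}_1\in\omega(\mathbf{x})$) so that continuity of the iterates applies. All of this machinery is avoidable: the paper's proof of the $C_\varepsilon$ bound is a direct monotonicity argument --- while $x_2>\varepsilon/2$ and $x_3<\varepsilon/2$, and since $x_1\ge\lambda_1/(2(\lambda_1+\lambda_2+\lambda_3))$ throughout, \eqref{cneop} gives $x_1^{(s+1)}\ge x_1^{(s)}(1+\Gamma(\varepsilon))$ with $\Gamma(\varepsilon)>0$, so this phase ends within a time depending only on $\varepsilon$.
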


\begin{proof} Without loss of generality, we assume that $i=1$. Let $\varepsilon_0>0$ be
  so small that $N_{1,\varepsilon_0} \subset G_1\cup G_2$ and
  \begin{equation}\label{einsgross}
    \Big(\frac {x_1}{\lambda_1}\Big)^2 \ge \frac {x_2}{\lambda_2} \frac {x_3}{\lambda_3}
  \end{equation}
  for all $\mathbf{x} \in N_{1,2\varepsilon_0}$ and let $\varepsilon \in (0,\varepsilon_0]$.
  For ease of notation, we write $x_{k,j}$ instead of $x^{(k)}_j=\big(W_{f,a,b,c}^k(\mathbf{x}^{(0)})\big)_j$.

  Let $m:=m(\varepsilon):=\inf\{s \in \N_0:\,x_{s,3}>\varepsilon/2\}-1$. Note that $m$ is finite by Proposition \ref{per}.  Then, by \eqref{cneop}, for $s\in \{1,.., m\}$,
  $$
\frac{x_{s+1,3}}{x_{s,3}} \le 1 +\varepsilon.
  $$
  Hence $\varepsilon/2 < x_{m+1,3}\le (1+\varepsilon)^m x_{1,3}\le 2 (1+\varepsilon)^m x_{0,3}$ which implies
  $$
m > \frac 1{\log(1+\varepsilon)} \big( \log \varepsilon-\log 4 -\log x_{0,3}\big) \ge  \frac 1\varepsilon \log \frac 1 {x_{0,3}} -C_\varepsilon,
$$
where $C_\varepsilon:=\frac{\log 4 -\log \varepsilon}{\log(1+\varepsilon)}$. Inserting $\varphi(\mathbf{x}^{(0)})$ we get
$$
m >\frac 1\varepsilon \frac 1 \lambda_3 \Big( \log \frac 1{\varphi(\mathbf{x}^{(0)})} + \log \big( x_{0,1}^{\lambda_1} x_{0,2}^{\lambda_2}\big)\Big) -C_\varepsilon.
$$
We need a lower bound for $x_{0,1}$ and $x_{0,2}$. Since $\mathbf{x}^{(0)}\in G_6$ we have
$$
x_{0,2}\ge \frac{\lambda_2}{\lambda_1+\lambda_2+\lambda_3}
$$
and since $\mathbf{x}^{(1)}\in G_1$ we get
$$
x_{0,1}\ge \frac 12 x_{1,1}\ge \frac 12 \frac{\lambda_1}{\lambda_1+\lambda_2+\lambda_3}.
$$
Therefore, there exists some $D_\varepsilon>0$ such that
$$
m \ge  \frac 1\varepsilon \frac 1 \lambda_3 \log \frac 1{\varphi(\mathbf{x}^{(0)})}-D_\varepsilon.
$$
Let $\kappa (\varepsilon):=\inf\{s \in \N: x_{s,2}\le \frac \varepsilon 2\}\wedge m(\varepsilon) -1$ and note that $\mathbf{x}^{(s)} \in N_{1,\varepsilon}$ for all
$s \in \{\kappa(\varepsilon)+1,...,m(\varepsilon)\}$ using \eqref{einsgross} and \eqref{anders}. Therefore,
$$
n(\varepsilon)\ge m(\varepsilon)-\kappa(\varepsilon),
$$
so both claims will follow once we know that for each $\varepsilon \in (0,\varepsilon_0]$,
$\kappa(\varepsilon)$ is bounded with respect to $\mathbf{x}^{(0)}$ as above.

To see that $\kappa (\varepsilon)$ is bounded with respect to $\mathbf{x}^{(0)}$ first note that $x_{s,1}$ is bounded away from 0 uniformly in $\varepsilon \in (0,\varepsilon_0]$ and
$x_{s,2}>\varepsilon/2$ and $x_{s,3}<\varepsilon/2$  for $s \in \{0,...,\kappa(\varepsilon)\}$. Hence, for $\varepsilon_0>0$ sufficiently small and $\varepsilon \in (0,\varepsilon_0]$, there exists some $\Gamma(\varepsilon)>0$ such that $x_{s+1,1}\ge x_{s,1}(1+ \Gamma(\varepsilon))>0$ for all $s \in \{0,...,\kappa(\varepsilon)-1\}$, so
$\kappa(\varepsilon)$ is bounded by a function of $\varepsilon$ and
the proof of the proposition is complete.
\end{proof}

\begin{proof}[Proof of Theorem \ref{mr}] Fix $\mathbf{x} \in \inte S^2\backslash \{\mathbf{x^*}\}$ and $\varepsilon\in(0,\varepsilon_0]$, where $\varepsilon_0>0$ is as
  in the previous proposition. Without loss of generality we (only) show that $\mathbf{e}_1$ is a limit point of the sequence  $\mathbf{c}_k^{(n)}(\mathbf{x})$, $n \in \N_0$ for each $k \in \N_0$
  (for $k=0$ this has already been proved in Proposition \ref{per}).  Let $u_1<v_1<u_2<v_2<...$ be a sequence in $\N$ such that $\varphi(\mathbf{x}^{(u_1)})\le \gamma_0$,
  $\mathbf{x}^{(u_i)} \notin U_1$ and  $\mathbf{x}^{(s)} \in U_1$ for all $s=\{u_i+1,...,v_i\}$ and   $\mathbf{x}^{(v_i+1)} \notin U_1$. Such a sequence exists by Proposition
  \ref{per}. By the previous proposition the number of times up to time $v_i$ the sequence spends in $N_{1,\varepsilon}$ is at least
  $$
n_i(\varepsilon)\ge \frac A{\varepsilon} \log (1/\varphi(\mathbf{x}^{(u_i)}))-B_\varepsilon.
$$
Since $\psi$ (defined in \eqref{psi}) is bounded away from 1 outside a neighborhood of $\mathbf{x}^*$ there exists some $\delta>0$ (depending on the strarting point $\mathbf{x}$)
such that $\varphi(\mathbf{x}^{(l)})\le (1-\delta)^l\varphi(\mathbf{x})$ for all $l \in \N$. Therefore,
$$
n_i(\varepsilon)\ge \frac A{\varepsilon}\Big( u_i\log \frac 1{1-\delta} + \log (1/\varphi(\mathbf{x}))\Big)   -B_\varepsilon.
$$
Using the last statement in the previous proposition, we see that the proportion of time which the orbit starting from $\mathbf{x}$ spends in $N_{1,\varepsilon}$  is asymptotically
(as $i \to \infty$) bounded from below by
$$
\Big(1+\frac 1{\frac A \varepsilon \log \frac 1{1-\delta}}\Big)^{-1}.
$$
Choosing $\varepsilon>0$ sufficiently small the assertion of the theorem follows from \eqref{Cesaroproperty}.
\end{proof}


Finally, we investigate the case in which all parameters $a,b,c$ are negative. In this case we get completely different asymptotic properties.

\begin{theorem}\label{mr2}
  If $a<0$, $b<0$, and $c<0$, then
  $$
  \lim_{k \to \infty} W^k_{f,a,b,c}(\mathbf{x})={\mathbf{x^*}}
  $$
  whenever $\mathbf{x} \in \inte S^2$,
  where
  $$
\mathbf{x^*}:=\bigg({\lambda_1\over \lambda_1+\lambda_2+\lambda_3},{\lambda_2\over \lambda_1+\lambda_2+\lambda_3},{\lambda_3\over \lambda_1+\lambda_2+\lambda_3}\bigg)
$$
and $\lambda_1=\sqrt[3]{|bc^2|}, \, \lambda_2=\sqrt[3]{|ab^2|}, \, \lambda_3=\sqrt[3]{|a^2c|}$.
\end{theorem}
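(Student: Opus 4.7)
The plan is to mirror the Lyapunov approach of Proposition \ref{lf}, but now with $\varphi(\mathbf{x}) = x_1^{\lambda_1}x_2^{\lambda_2}x_3^{\lambda_3}$ (with $\lambda_i$ as in the theorem) as an \emph{increasing} Lyapunov function whose unique global maximum on $S^2$ is $\mathbf{x}^*$. Once this is in hand, a standard LaSalle-type argument yields convergence to $\mathbf{x}^*$.

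Setting $\tilde a = -a$, $\tilde b = -b$, $\tilde c = -c \in (0,1]$, I would rewrite \eqref{cneop} as $x_i' = x_i(1+A_i(\mathbf{x})f(\mathbf{x}))$ with $A_1 = \tilde b x_3^2 - \tilde a x_1 x_2$, $A_2 = \tilde a x_1^2 - \tilde c x_2 x_3$ and $A_3 = \tilde c x_2^2 - \tilde b x_1 x_3$. Each $A_i$ is the negative of the corresponding expression appearing in the proof of Proposition \ref{lf} (applied to the positive triple $\tilde a,\tilde b,\tilde c$), so the very same cubic identity that produced $-F/2$ there now yields
\[\sum_{i=1}^3 \lambda_i A_i \;=\; \tfrac12\,F(\tilde a,\tilde b,\tilde c,x_1,x_2,x_3)\;\geq\;0,\]
with equality iff $\mathbf{x} = \mathbf{x}^*$.

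The central step is to upgrade this infinitesimal inequality to the multiplicative statement $\psi(\mathbf{x}) := \prod_i(1+A_if(\mathbf{x}))^{\lambda_i} \geq 1$. The weighted AM--GM inequality used in Proposition \ref{lf} only yields an \emph{upper} bound on $\psi$, so a different device is needed. My approach is first to reduce to unit speed: for fixed $\mathbf{x}$, the auxiliary function $g(s) := \sum_i \lambda_i \log(1+A_i s)$ satisfies $g(0) = 0$ and is concave on $[0,1]$ (since $g''(s) = -\sum \lambda_i A_i^2/(1+A_i s)^2 \leq 0$), so the chord inequality gives $g(f) \geq f\cdot g(1)$ for all $f \in [0,1]$. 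It therefore suffices to prove the purely algebraic inequality $\prod_i(1+A_i)^{\lambda_i} \geq 1$ corresponding to the unit-speed operator; I would attempt this by applying the harmonic--geometric mean inequality with weights $\lambda_i/(\lambda_1+\lambda_2+\lambda_3)$, which reduces the claim to $\sum_i \lambda_i A_i/(1+A_i) \geq 0$, and then verifying this by expansion in the coordinates $u_i = x_i/\lambda_i$, using the stochasticity relation $\sum_i x_i A_i = 0$ and the cyclic polynomial structure of the $A_i$. This algebraic step is the main obstacle I expect to encounter, as it genuinely requires going beyond the concavity-based argument that sufficed in the positive case.

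With $\psi \geq 1$ (and strict inequality off $\{\mathbf{x}^*\} \cup \partial S^2$) in hand, the remainder is routine. The sequence $\varphi(\xn)$ is nondecreasing and bounded above by $\varphi(\mathbf{x}^*)$, hence convergent. For $\xo \in \inte S^2$ one has $\varphi(\xo) > 0$, so $\{\xn\}$ is confined to the compact set $\{\mathbf{x}\in S^2: \varphi(\mathbf{x}) \geq \varphi(\xo)\} \subset \inte S^2$, which is disjoint from $\partial S^2$. Any $\mathbf{y} \in \omega(\xo)$ therefore lies in $\inte S^2$ and satisfies $\varphi(W(\mathbf{y})) = \varphi(\mathbf{y})$, i.e.~$\psi(\mathbf{y}) = 1$, which by strict inequality forces $\mathbf{y} = \mathbf{x}^*$. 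Hence $\omega(\xo) = \{\mathbf{x}^*\}$ and $W^k_{f,a,b,c}(\xo) \to \mathbf{x}^*$.
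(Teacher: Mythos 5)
Your overall architecture is sound: the same Lyapunov function $\varphi(\mathbf{x})=x_1^{\lambda_1}x_2^{\lambda_2}x_3^{\lambda_3}$, now increasing, with the LaSalle-type endgame (which you carry out correctly, and which the paper leaves implicit), and your identity $\sum_i\lambda_iA_i=\tfrac12F(\tilde a,\tilde b,\tilde c,\mathbf{x})\ge 0$ with equality only at $\mathbf{x^*}$ is correct. The genuine gap is the step you yourself flag: the claim $\prod_i(1+A_i)^{\lambda_i}\ge 1$, to which you reduce everything, is never proved. Your reduction via concavity of $s\mapsto\sum_i\lambda_i\log(1+A_is)$ is logically valid but goes in the \emph{wrong direction for difficulty}: it discards the only quantitative leverage available, namely the smallness of $f$, and lands you exactly in the unit-speed case $f\equiv 1$. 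This is precisely the case the paper does \emph{not} handle — its proof is explicitly restricted to $f(\mathbf{x})\le\frac54\min_{i,j}\{\lambda_i/\lambda_j\}$ (which excludes $f=1$ whenever the $\lambda_i$ are sufficiently asymmetric), with the general case deferred to the separate preprint \cite{genPar}. So the ``purely algebraic inequality'' you plan to verify by expansion is the hard core of the whole theorem, not a routine computation.

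Moreover, your proposed tool for that core is likely to fail or at least to require a different idea. The weighted harmonic--geometric mean inequality reduces $\prod_i(1+A_i)^{\lambda_i}\ge 1$ to the \emph{stronger} statement $\sum_i\lambda_iA_i/(1+A_i)\ge 0$. Since $t/(1+t)\le t$ for all $t>-1$, this is a strict strengthening of the known first-order inequality $\sum_i\lambda_iA_i\ge 0$: writing $\frac{A_i}{1+A_i}=A_i-\frac{A_i^2}{1+A_i}$, you would need $\tfrac12 F\ge\sum_i\lambda_iA_i^2/(1+A_i)$, i.e.\ a genuinely second-order estimate comparing $\sum_i\lambda_i^{-1}(w^2-uv)^2+\dots$ against $\tfrac12\big((u-v)^2+(v-w)^2+(u-w)^2\big)$ uniformly over the simplex and over all negative $a,b,c$. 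That is exactly the kind of delicate bound the paper obtains only by exploiting the extra factors $f(\mathbf{x})$ and $f(\mathbf{x})^2$ in its terms $F_2$ and $F_3$ (together with the restriction on $f$), after first reducing all exponents to $\lambda_{\min}$ via monotonicity of $\lambda\mapsto(1+\lambda^{-1}\xi)^\lambda$ and using concavity of $\log$ in the \emph{index} $i$ rather than in the speed variable. Unless you either retain the smallness of $f$ in your estimates (as the paper does) or supply a complete proof of the unit-speed inequality, the argument is incomplete at its central step.
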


\begin{proof}
	We restrict our proof to the case $f(\mathbf{x}) \leq \frac{5}{4} \min
	\left\{ \frac{\lambda_i}{\lambda_j} : i,j \in \left\{1,2,3\right\} \right\}$.
	For the general case see \cite{genPar}.
	Using the same Lyapunov function as in Proposition \ref{lf}, it is enough to show that
	\begin{align*}
		\psi(\mathbf{x})=
		\left( 1 + \left( ax_1x_2 - bx_3^2 \right) f (\mathbf{x}) \right)^{\lambda_1}
		\left( 1 + \left( cx_2x_3 - ax_1^2 \right) f (\mathbf{x}) \right)^{\lambda_2}
		\left( 1 + \left( bx_3x_1 - cx_2^2 \right) f (\mathbf{x}) \right)^{\lambda_3}
		>1
	\end{align*}
	whenever $\mathbf{x} \in \inte S^2\setminus \{{\mathbf{x^*}}\}$.
	For simplicity we set
	\begin{align*}
		u= \sqrt[3]{|a^2b|} x_1 \sqrt{f(\mathbf{x})} \qquad
		v= \sqrt[3]{|ac^2|} x_2 \sqrt{f(\mathbf{x})} \qquad
		w= \sqrt[3]{|b^2c|} x_3 \sqrt{f(\mathbf{x})}.
	\end{align*}
	Without loss of generality we let
	$\lambda_1 = \min \left\{ \lambda_1, \lambda_2, \lambda_3 \right\}$.
	Since $\lambda \mapsto (1 + \lambda^{-1} \xi )^\lambda$ is non-decreasing
	on $[\lambda_1,1]$ for all $\xi \in (-\lambda_1, \infty)$, it follows that
	\begin{align*}
		\psi (\mathbf{x}) &=
		\left( 1 + \lambda_1^{-1} \left( w^2-uv \right) \right)^{\lambda_1}
		\left( 1 + \lambda_2^{-1} \left( u^2-vw \right) \right)^{\lambda_2}
		\left( 1 + \lambda_3^{-1} \left( v^2-uw \right) \right)^{\lambda_3}
		\\& \geq
		\left( 1 + \lambda_1^{-1} \left( w^2-uv \right) \right)^{\lambda_1}
		\left( 1 + \lambda_1^{-1} \left( u^2-vw \right) \right)^{\lambda_1}
		\left( 1 + \lambda_1^{-1} \left( v^2-uw \right) \right)^{\lambda_1}
		.
	\end{align*}
	Using monotonicity and concavity of the logarithmic function, we obtain
	\begin{align*}
		\frac{1}{3 \lambda_1} \ln \psi (\mathbf{x}) &\geq - \left(
			 \frac{1}{3} \ln \frac{1}{1 + \lambda_1^{-1} \left( w^2-uv \right)}
			+\frac{1}{3} \ln \frac{1}{1 + \lambda_1^{-1} \left( u^2-vw \right)}
			+\frac{1}{3} \ln \frac{1}{1 + \lambda_1^{-1} \left( v^2-uw \right)} \right) \\
		&\geq - \ln \left( \frac{1}{3} \left(
			 \frac{1}{1 + \lambda_1^{-1} \left( w^2-uv \right)}
			+\frac{1}{1 + \lambda_1^{-1} \left( u^2-vw \right)}
			+\frac{1}{1 + \lambda_1^{-1} \left( v^2-uw \right)} \right) \right) \\
		&\geq - \ln \left( 1 -
			\frac{F(\mathbf{x})}{3
			\left(1 + \lambda_1^{-1} \left( w^2-uv \right) \right)
			\left(1 + \lambda_1^{-1} \left( u^2-vw \right) \right)
			\left(1 + \lambda_1^{-1} \left( v^2-uw \right) \right) } \right)
	\end{align*}
	where
	\begin{align*}
		F( \mathbf{x} ) &= \lambda_1^{-1} \left( u^2 +v^2 +w^2 - uv -uw -vw \right) \\
			&\quad + 2\lambda_1^{-2} \left( (w^2 -uv)(u^2-vw) + (w^2-uv) (v^2 -uw)
				+ (u^2 -vw) (v^2 -uw) \right) \\
			& \quad + \lambda_1^{-3} (w^2 -uv) (u^2 -vw) (v^2-uw).
	\end{align*}
	It remains to show that $ F( \mathbf{x} ) >0$ for all
	$\mathbf{x} \in \inte S^2\setminus \{{\mathbf{x^*}}\}$.
	Therefore, we write
	$F( \mathbf{x} ) = \lambda_1^{-1} F_1( \mathbf{x} )+
	2 \lambda_1^{-2} F_2( \mathbf{x} ) +\lambda_1^{-3} F_3( \mathbf{x} )$
	and estimate the terms $F_1( \mathbf{x} ), F_2( \mathbf{x} ), F_3( \mathbf{x} )$
	separately. For any $\mathbf{x} \in \inte S^2\setminus \{{\mathbf{x^*}}\}$,
	the first term is positive since
	\begin{align*}
		F_1( \mathbf{x} ) =  u^2 +v^2 +w^2 - uv -uw -vw
		= \frac{1}{2} (u-v)^2 + \frac{1}{2} (u-w)^2 + \frac{1}{2} (v-w)^2 >0.
	\end{align*}
	The second term can be estimated by
	\begin{align*}
		F_2( \mathbf{x} ) &=(w^2 -uv)(u^2-vw) +(w^2-uv) (v^2 -uw) +(u^2 -vw) (v^2 -uw) \\
		&= - F_1( \mathbf{x} ) (uv +uw +vw) \\
		&\geq - F_1( \mathbf{x} )
			( \lambda_1 x_1 x_2 f(\mathbf{x}) +\lambda_3 x_1x_3 f(\mathbf{x})
			+ \lambda_2 x_2x_3 f(\mathbf{x}) ) \\
		&\geq - \frac{5}{4} \lambda_1 F_1 ( \mathbf{x} )  (x_1x_2 +x_1x_3+x_2x_3)
		\geq - \frac{5}{12} \lambda_1 F_1( \mathbf{x} ).
	\end{align*}
	Estimating the third and last term, we obtain
	\begin{align*}
		F_3( \mathbf{x} ) &= (w^2 -uv) (u^2 -vw) (v^2-uw) \\
		&=			  \frac{1}{2} (u-v)^2 (-uw^3 +u^2vw -vw^3 +uv^2w)\\
			& \quad + \frac{1}{2} (u-w)^2 (-uv^3 +u^2vw -v^3w +uvw^2)\\
			& \quad + \frac{1}{2} (v-w)^2 (-u^3v +uv^2w -u^3w +uvw^2)\\
		&\geq -\frac{1}{2} (u-v)^2 (\lambda_1\lambda_3 x_1x_3^3 f(\mathbf{x})^2
				+ \lambda_1\lambda_2x_2x_3^3 f(\mathbf{x})^2)\\
			& \quad - \frac{1}{2} (u-w)^2 (\lambda_1\lambda_3x_1x_2^3  f(\mathbf{x})^2
				+ \lambda_2\lambda_3 x_2^3x_3 f(\mathbf{x})^2)\\
			& \quad - \frac{1}{2} (v-w)^2 (\lambda_1\lambda_2x_1^3x_2  f(\mathbf{x})^2
				+\lambda_2\lambda_3x_1^3x_3 f(\mathbf{x})^2)\\
		&\geq -  \frac{25}{16} \lambda_1^2  \Big(
			  \frac{1}{2} (u-v)^2 (x_1x_3^3 +x_2x_3^3))
			+ \frac{1}{2} (u-w)^2  (x_1x_2^3 +x_2^3x_3) \\
			& \quad + \frac{1}{2} (v-w)^2  (x_1^3x_2 +x_1^3x_3) \Big)\\
		&\geq -\frac{25}{16} \frac{27}{256} \lambda_1^2 F_1( \mathbf{x})
		> -\frac{1}{6} \lambda_1^2 F_1( \mathbf{x}).
	\end{align*}
	for any $\mathbf{x} \in \inte S^2\setminus \{{\mathbf{x^*}}\}$. To conclude
	\begin{align*}
		F( \mathbf{x} ) >
		\lambda_1^{-1} F_1( \mathbf{x} ) \left(1- \frac{5}{6} - \frac{1}{6} \right) =0
	\end{align*}
	for any $\mathbf{x} \in \inte S^2\setminus \{{\mathbf{x^*}}\}$, so the proof is complete.

%
\end{proof}

\section{Remarks concerning the continuous-time case}\label{conticase}

If we replace $f(x)$ by $f(x)/n$ in \eqref{cneop}, $n \in \N$,  then the resulting recursion is the Euler scheme for the ode
\begin{align*}
  \frac {dx_1}{dt}&=x_1(ax_1x_2-bx_3^2)f(\mathbf{x})\\ 
  \frac {dx_2}{dt}&=x_2(cx_2x_3-ax_1^2)f(\mathbf{x})\\
  \frac {dx_3}{dt}&=x_3(bx_1x_3-cx_2^2)f(\mathbf{x}).\\
\end{align*}
Therefore, as $n \to \infty$, the corresponding solutions converge uniformly on compact time intervals (in the usual sense).
Obviously, the fixed points remain
unchanged as $n$ changes. We will not provide detailed arguments showing that the asymptotics also remain unchanged in the limit. We just mention that
the function $\varphi(\mathbf{x})=x_1^{\lambda_1} x_2^{\lambda_2} x_3^{\lambda_3}$ defined in Proposition \ref{lf} remains a Lyapunov function when $n \to \infty$
(i.e.~$\langle \varphi(\mathbf{x}),\frac{dx}{dt}\rangle<0$ for $\mathbf{x} \in \inte(S^2)\backslash\{\mathbf{x}^*\}$) 
and the proof is even easier than in the discrete case.

\section{Conclusion}\label{sec:conclusion}

We considered  a prey-predator model with three interacting species defined by \eqref{cneop} and studied its
asymptotic behavior. If $a,b$ and $c$ are all positive, then  we have  $\min\limits_{1\leq i\leq3} x_i^{(n)}\rightarrow 0$, when $n\rightarrow \infty $ for every starting point
in the interior of the simplex except for the unique fixed point (Proposition \ref{lf}), while Proposition \ref{staytime} states that $ \lim\sup_{t\rightarrow\infty} x_i(t)>0$
for any  initial $\mathbf{x}\in \inte S^2$ and for all $i=1,2,3$, that is the biological system is weakly persistent (in the sense of
Definition \ref{pgh}) but not strongly persistent.
In this case the biological system describes a kind of circular prey-predator model and Theorem  \ref{mr}
shows non-ergodic behaviour in which all species survive but for most of the time one of the three species dominates while the other two species are almost extinct.

If all parameters are negative, then the population stabilizes towards an equilibrium in which all three species survive. 
In this case the biological system is strongly persistent (Theorem \ref{mr2}) in sense of Definition \ref{pfw}.

If two of the parameters have a different sign then, in the limit, only one species survives, that is
the biological system is not even weakly persistent.

%
%

\section*{Acknowledgments}
This work was done in the Technische Universit\"at (TU) Berlin, Germany. The first author (UJ)  thanks the TU Berlin,  for the kind hospitality and for  providing all facilities and the  German Academic Exchange Service (DAAD) for providing financial support by a scholarship.

\end{document}